\definecolor{webgreen}{rgb}{0,.5,0}
\definecolor{webbrown}{rgb}{.6,0,0}
\newtheorem{thm}{Theorem}[section]
\newtheorem{prop}[thm]{Proposition}
\theoremstyle{remark}
\newtheorem*{rmk*}{Remark}
\newtheorem*{ex*}{Example}
\title{Sizes of Simultaneous Core Partitions}
\author{Chaim Even-Zohar}
\address{Chaim Even-Zohar, The Alan Turing Institute, London, NW1 2DB, UK}
\email{chaim@ucdavis.edu}
\begin{document}
\maketitle

\begin{abstract}
\vspace*{-3em}
There is a well-studied correspondence by Jaclyn Anderson between partitions that avoid hooks of length $s$ or $t$ and certain binary strings of length $s+t$. Using this map, we prove that the total size of a random partition of this kind converges in law to Watson's~$U^2$ distribution, as conjectured by Doron Zeilberger.
\vspace*{1em}
\end{abstract}


\newcommand{\boxes}[1]{\raisebox{-2pt}{\tikz{
\foreach \i/\k in {#1} \foreach \j in {1,...,\i} 
\draw (0.12*\j, -0.12*\k) rectangle (0.12*\j+0.12, -0.12*\k+0.12);}}}

A \emph{partition} is a finite set of square boxes stacked in the upper left corner, as in Figure~\ref{hook}. Every box in a partition has a \emph{hook}, the set of boxes directly to its right or below it. A~partition is called $p$-core if no hook has exactly $p$ boxes. Such partitions arise in the context of $p$-modular representations of the symmetric group, and go back to Nakayama~\cite{james1981representation, nakayama1940some}.

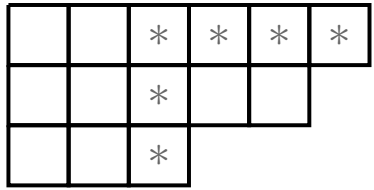
\begin{figure}[tb]
\begin{tikzpicture}[scale=0.8]
\foreach \x/\y/\z in {1/1/,1/2/,1/3/,2/1/,2/2/,2/3/,3/1/$\ast$,3/2/$\ast$,3/3/$\ast$,4/1/$\ast$,4/2/,5/1/$\ast$,5/2/,6/1/$\ast$}
{\draw[black,line width=1.5pt]
(\x,-\y) -- (\x+1,-\y) -- (\x+1,-\y-1) -- (\x,-\y-1) -- (\x,-\y); \node[color=black!60] at(\x+0.5,-\y-0.5){\LARGE \z};}
\pgfresetboundingbox \clip (0.5,-4.5) rectangle (7.5,-0.5);
\end{tikzpicture}
\caption{An example of a partition with 14 boxes. It is denoted by the row sizes: \textrm{6\,5\,3}. The hook that corresponds to the third box in the first row is made of the 6 boxes marked by~$\ast$. Therefore this partition is \emph{not} a 6-core.}
\label{hook}
\end{figure}

There are infinitely many $p$-core partitions for every~$p \geq 2$. However, if $s$ and $t$ are coprime then only finitely many partitions are simultaneously $s$-core and $t$-core, or for short \emph{$(s,t)$-core}. See \cite{gorsky2014compactified, gorsky2014affine, armstrong2014results, thiel2016anderson, thiel2017strange} for algebraic structures related to simultaneous core partitions.

A~wonderful bijection by Anderson relates $(s,t)$-core partitions to Dyck paths in an $s \times t$ rectangle, or equivalently to $s/t$-ballot words~\cite{anderson2002partitions}. An \emph{$s/t$-ballot word} is an $(s+t)$-long string of $S$ and~$T$, such that the proportion of occurrences $\#S/\#T$ in any prefix of the string is at least its overall proportion~$s/t$. The so-called \emph{rational Catalan number}~$\tfrac{1}{s+t}\tbinom{s+t}{s}$ gives the count of such words, and hence, the number of $(s,t)$ core partitions.

A conjecture by Armstrong~\cite[\S2]{armstrong2014results}, that $(s,t)$ core partitions have $\tfrac{1}{24}(s+t+1)(s-1)(t-1)$ boxes on average, has been proven by several methods in several works \cite{stanley2015catalan, aggarwal2015armstrong, johnson2018lattice, wang2016simultaneous, ekhad2015explicit}. It has also been shown that the variance is $\tfrac{1}{1440}(s+t+1)(s+t)s(s-1)t(t-1)$, and higher moments have been similarly computed by efficient algorithms \cite{thiel2017strange, ekhad2015explicit}. Based on their leading terms and an online search, a precise limit distribution for large $s$ and~$t$ has been conjectured by Zeilberger \cite{ekhad2015explicit, zaleski2017explicit-13}. The following theorem meets this challenge.

\begin{thm} \label{stcore}
Let $X_{st}$ be the total size of a uniformly random $(s,t)$ core partition for coprime $s$ and $t$. Then
$$ \frac{X_{st}}{st(s+t)/2} \;\;\;\xrightarrow[\;\;s \to \infty, \; t \to \infty\;\;]{D}\;\;\; U^2 \;\sim\; \sum\limits_{k=1}^{\infty}\frac{Z_k^2+\tilde{Z}_k^2}{4\pi^2k^2} $$
where all $Z_k$ and $\tilde{Z}_k$ are mutually independent standard normal random variables.
\end{thm}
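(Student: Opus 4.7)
My plan is to push the problem through Anderson's bijection to uniform ballot words, express the partition size as a cyclically symmetric quadratic functional of the word, and then invoke Donsker's theorem plus continuous mapping to identify the limit with the Karhunen--Loève representation of Watson's $U^2$.

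\textbf{Step 1 (A quadratic size formula).} Let $w=w_1\cdots w_{s+t}$ be the $s/t$-ballot word associated with $\lambda$, and set $A_k=\#\{i\le k:w_i=S\}$ together with the centered deviation $D_k = A_k - ks/(s+t)$. Using the beta-set/abacus description of $s$-cores, combined with Anderson's reading of the beta numbers off the ballot word, I would derive an identity of the form
$$|\lambda|\;=\;\frac{s+t}{2}\sum_{k=0}^{s+t-1}\bigl(D_k-\bar D\bigr)^{2}\;+\;E_{s,t},$$
with $\bar D=\tfrac{1}{s+t}\sum_k D_k$ and a deterministic remainder $E_{s,t}$ of lower order. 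As a consistency check, Armstrong's mean $\tfrac{1}{24}(s+t+1)(s-1)(t-1)$ and the variance $\tfrac{1}{1440}(s+t+1)(s+t)s(s-1)t(t-1)$ should be recovered from the first two moments of the right-hand side. The centering by $\bar D$ reflects cyclic invariance: by the Dvoretzky--Motzkin lemma, each cyclic class of a word with $s$ copies of $S$ and $t$ copies of $T$ (for $\gcd(s,t)=1$) has exactly one ballot representative, so any size-like statistic must essentially depend only on the cyclic class.

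\textbf{Step 2 (Functional invariance principle).} Because the expression in Step 1 is rotation invariant, I may analyze the same functional on a uniform \emph{unconstrained} word with $s$ copies of $S$ and $t$ copies of $T$. The rescaled path
$$B_{s,t}(u)\;=\;\sqrt{\tfrac{s+t}{st}}\,D_{\lfloor(s+t)u\rfloor},\qquad u\in[0,1],$$
is a centered hypergeometric random walk whose variance at time $u$ tends to $u(1-u)$. By the classical invariance principle for sampling without replacement, $B_{s,t}\to B$ in distribution on $C[0,1]$ under the uniform topology, where $B$ is a standard Brownian bridge, jointly as $s,t\to\infty$.

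\textbf{Step 3 (Continuous mapping and identification of $U^2$).} The map $\omega\mapsto\int_0^1\bigl(\omega(u)-\int_0^1\omega\bigr)^{2}du$ is continuous on $C[0,1]$. Rewriting the sum in Step 1 as a Riemann sum in terms of $B_{s,t}$, multiplying and dividing by $st/(s+t)$, and applying the continuous mapping theorem gives
$$\frac{X_{st}}{st(s+t)/2}\;\xrightarrow{D}\;\int_0^1\bigl(B(u)-\bar B\bigr)^{2}\,du,\qquad \bar B=\int_0^1 B.$$
The right-hand side is Watson's $U^2$: the covariance operator of the centered bridge $B-\bar B$ has eigenvalues $1/(4\pi^2k^2)$, each of multiplicity two (from the $\sin$ and $\cos$ modes on the circle), so its Karhunen--Loève expansion is precisely $\sum_{k\ge1}(Z_k^2+\tilde Z_k^2)/(4\pi^2k^2)$.

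\textbf{Main obstacle.} The heart of the argument is Step~1 --- producing a clean closed form expressing $|\lambda|$ as a discrete $L^2$-norm of the centered deviation path, with a well-controlled deterministic remainder. Once this identity is in hand, everything downstream is standard probability: a Donsker theorem for hypergeometric walks followed by the continuous mapping theorem. A secondary concern is uniform tightness as both parameters grow (so the joint limit $s,t\to\infty$ is genuine rather than iterated), but the quadratic functional is continuous in the sup norm on $C[0,1]$, so the usual moment bounds for Brownian bridge approximations handle this without difficulty.
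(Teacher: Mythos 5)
Your route is viable and genuinely different from the paper's, but its load-bearing step is missing. The paper's proof rests on an exact pattern-count identity (Proposition~\ref{stst}): the size of the partition corresponding to the ballot word $w$ equals $\tfrac{(s^2-1)(t^2-1)}{24}-\tfrac12\left(\#STST(w)+\#TSTS(w)\right)$, proven by a careful induction on adjacent transpositions $TS\to ST$; this is then matched against Persson's formula for Watson's two-sample statistic, and the limit $U_{st}^2\to U^2$ is imported from Janson. Your Step~1 plays exactly the same role, and the identity you posit is in fact correct: unwinding Watson's original definition of $U_{st}^2$ as a centered quadratic functional of $F_s-G_t$ along the combined order statistics gives $\tfrac{st(s+t)}{2}U_{st}^2=\tfrac{s+t}{2}\sum_k(D_k-\bar D)^2$, so combining the paper's Proposition~\ref{stst} with Persson's identity yields precisely $|\lambda|=\tfrac{s+t}{2}\sum_k(D_k-\bar D)^2-\tfrac{(s+t)^2-1}{24}$. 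But you do not derive it: ``I would derive an identity of the form,'' plus a consistency check against the first two moments, is not a proof, and this identity is where essentially all of the mathematical content of the theorem lives. The natural starting point is the paper's observation that $|\lambda|=\sum_{a\in A}a-\binom{|A|}{2}$ for the beta-set $A$ read off the word, but turning that into your quadratic formula (or into the pattern-count formula) takes real work --- the paper spends its entire Proposition~\ref{stst} proof on it. Until Step~1 is actually established, the argument has a gap exactly at the point of difficulty.

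The rest of your plan is sound and worth comparing with the paper. Your observation that centering by $\bar D$ makes the functional cyclically invariant is correct (rotating the word by $j$ replaces $D_k$ by $D_{k+j}-D_j$, which leaves the multiset $\{D_k-\bar D\}$ unchanged), so the Dvoretzky--Motzkin transfer from ballot words to unconstrained words is valid --- this is the same cycle-lemma step the paper uses, applied to a different (but equivalent) rotation-invariant statistic. Where you diverge is downstream: the paper avoids proving any functional limit theorem by recognizing the statistic as Persson's form of $U_{st}^2$ and citing Janson for the unrestricted joint limit $s,t\to\infty$; you propose to reprove that limit via a Donsker theorem for the two-sample empirical process plus continuous mapping. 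That is doable, but it is exactly the content of Janson's theorem, and you should be explicit that the invariance principle you need holds under $\min(s,t)\to\infty$ alone (when $t\gg s$ the rescaled path is essentially the empirical process of $s$ uniform points, so convergence in $D[0,1]$ to the continuous Brownian bridge still holds and suffices for the continuous mapping step). Your identification of $\int_0^1(B-\bar B)^2$ with $\sum_k(Z_k^2+\tilde Z_k^2)/(4\pi^2k^2)$ via the doubly degenerate eigenvalues of the centered bridge is classical and correct.
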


We note that the limiting expectation is $E[U^2] = 1/12$ in accordance with Armstrong's conjecture, and the variance is $V[U^2] = 1/360$. The tail probability $P[U^2 > t]$ is given by  $2\sum_{m=1}^{\infty}(-1)^{m-1}\exp(-2m^2\pi^2t)$. This asymptotic distribution originates in Watson's~$U^2$ test for goodness of fit on a circle~\cite{watson1961goodness}, and its two-sample variant~\cite{watson1962goodness}. We provide additional background on these statistics later on, and show that especially the latter is closely related to $X_{st}$, even for finite $s$ and~$t$. 

\begin{rmk*}
Theorem~\ref{stcore} addresses the first challenge in~\cite{ekhad2015explicit}, without the assumption that $s-t$ is fixed. It applies, for example, to $(s,2s+1)$ core partitions, or $(s,s^s+1)$ core partitions.
\end{rmk*}

The main ingredients in the proof of Theorem~\ref{stcore} are a new formula for the number of boxes in an $(s,t)$ core partition, and a similar formula due to Persson for Watson's statistic~\cite{persson1979new}. The new formula, given in Proposition~\ref{stst} below, relates the partition's size to the occurrences of $STST$ and $TSTS$ in the corresponding $s/t$ ballot word. 

\begin{rmk*}
Proposition~\ref{stst} also provides a different approach to Johnson's result~\cite{johnson2018lattice}, that all the moments of the partition's size are polynomials in $s$ and $t$, arguably in less than two pages. This is the second challenge in~\cite{ekhad2015explicit}. 
\end{rmk*}

Since these challenges were posed, much attention has been paid to restricted families of core partitions. These include partitions into \emph{distinct} parts, that are $(s,s+1)$ core \cite{amdeberhan2015theorems, straub2016core, zaleski2017explicit-ss1, paramonov2018cores, johnson2018simultaneous, xiong2018core}, or $(s,s+2)$ core \cite{yan20172k, zaleski2017explicit-13, baek2018bijective, paramonov2018cores}, or
$(s,ds \pm 1)$ core \cite{aggarwal2015armstrong,straub2016core, nath2017abaci, zaleski2017explicit-d, xiong2018largest, xiong2019polynomiality}, and similarly partitions into \emph{odd} parts \cite{zaleski2017intriguing, johnson2018simultaneous}. Also \emph{self-conjugate} $(s,t)$ core partitions have been studied \cite{ford2009self, chen2016average, wang2016simultaneous, wang2018moments}, and further restricted families that avoid more than two hook lengths, such as $(s,s+1,s+2)$ core partitions \cite{amdeberhan2015multi, aggarwal2015does,  amdeberhan2015theorems, yang2015enumeration, xiong2016largest, baek2019johnson}. 

The enumeration of restricted partitions has been established in various cases, as well as their maximum and average size, and some higher moments. Remarkably, the size distribution of $(s,s+1)$ core partitions into distinct parts has been shown to be asymptotically normal~\cite{komlos2018asymptotic}. While Theorem~\ref{stcore} settles the asymptotics of the fundamental case of general $(s,t)$-core partitions, the formula in Proposition~\ref{stst} is applicable also to restricted cases, such as those mentioned above.

\subsection*{Plan}
First, we describe the correspondence by Anderson between simultaneous core partitions and ballot words. Next, we state and prove the new formula for the size of a partition. Then, we give some statistical background on Watson's $U^2$ distribution. Finally, we combine all these ingredients and deduce Theorem~\ref{stcore}.

\medskip

\subsection*{Anderson's Bijection}
\cite{anderson2002partitions}
An \emph{$s/t$-ballot word} is a binary string $w$ of length $s+t$ over the alphabet~$\{S,T\}$, such that the numbers of appearances of the two letters are $\#S(w)=s$ and $\#T(w)=t$, and for every prefix $p$ of $w$ their appearance ratio satisfies $\#S(p)/\#T(p) \geq s/t$. For example $STSTT$ is a $2/3$-ballot word. We note that this is equivalent to a generalized Dyck path, which is a staircase walk from $(0,0)$ to $(s,t)$ in a rectangular $s \times t$ grid that lies above the diagonal connecting these two corners.

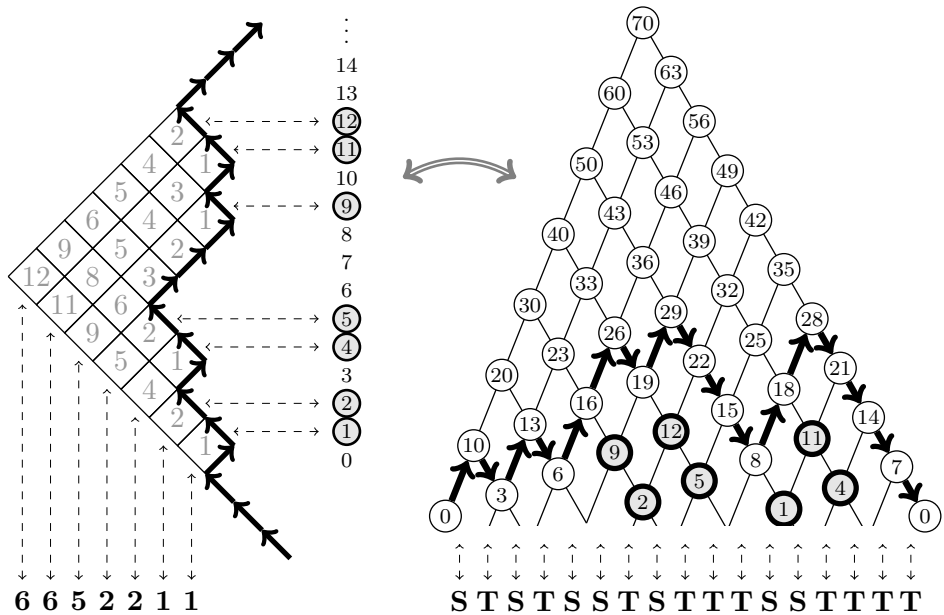
\begin{figure}[b]
\begin{tikzpicture}[line width=0.75pt,black,scale=0.375]
\foreach \y in {1,2,4,5,9,11,12}
\node[circle, draw, line width = 1pt, fill=black!10, inner sep=0pt, minimum width=10pt] at(-1,\y)(0){\scriptsize \y};
\foreach \y in {0,3,6,7,8,10,13,14}
\node at(-1,\y){\scriptsize \y};
\node at(-1,15.5){\scriptsize $\vdots$};
\foreach \x/\y/\z in {7/1/1, 8/2/2, 9/3/4, 10/4/5, 11/5/9, 12/6/11, 13/7/12, 8/4/1, 9/5/2, 10/6/6, 11/7/8, 12/8/9, 9/7/3, 10/8/5, 11/9/6, 8/8/2, 9/9/4, 10/10/5, 7/9/1, 8/10/3, 9/11/4, 7/11/1, 8/12/2}
{\draw[black,line width=0.5pt]
(-\x,\y-0.5) -- (-\x+1,\y+0.5) -- (-\x+2,\y-0.5) -- (-\x+1,\y-1.5) -- (-\x,\y-0.5); \node[color=black!40] at(-\x+1,\y-0.5){\z};}
\foreach \y/\x in {1/5, 2/6, 4/6, 5/7, 9/5, 11/5, 12/6}
{\draw[->,line width=2pt] (-\x,\y-0.5)--(-\x-1,\y+0.5); \draw[<->,dashed, line width=0.25] (-2,\y)--(-\x,\y);}
\foreach \y/\x in {-3/3, -2/4, -1/5}
{\draw[->,line width=2pt] (-\x,\y-0.5)--(-\x-1,\y+0.5);}
\foreach \y/\x in {0/6, 3/7, 6/8, 7/7, 8/6, 10/6, 13/7, 14/6, 15/5}
\draw[->,line width=2pt] (-\x,\y-0.5)--(-\x+1,\y+0.5);
\foreach \x in {-14,-13,-12, -11,-10,-9,-8}\draw[<->,dashed, line width=0.25] (\x+1.5,-4.25)--(\x+1.5,-8.5-\x);
\foreach \x/\y in {-14/6, -13/6, -12/5, -11/2, -10/2, -9/1, -8/1} \node at(\x+1.5,-5){\textbf\y};
\def\shift{19.5}
\foreach \y/\x in {70/10, 63/9, 60/11, 56/8, 53/10, 50/12, 49/7, 46/9, 43/11, 42/6, 40/13, 39/8, 36/10, 35/5, 33/12, 32/7, 30/14, 29/9, 28/4, 26/11, 25/6, 23/13, 22/8, 21/3, 20/15, 19/10, 18/5, 16/12, 15/7, 14/2, 13/14, 10/16, 8/6, 7/1, 6/13, 3/15} 
{ \draw[-,line width=0.5pt] (\shift-\x+1,\y/4-7/4-2) -- (\shift-\x,\y/4-2) -- (\shift-\x-1,\y/4-10/4-2);
\node[circle, line width=0.5pt, draw, fill=black!0, inner sep=0pt, minimum width=12pt] at(\shift-\x,\y/4-2)(\y){\scriptsize \y};}
\foreach \y/\x in {12/9, 11/4, 9/11, 5/8, 4/3, 2/10, 1/5} 
{ \draw[-,line width=0.5pt] (\shift-\x+1,\y/4-7/4-2) -- (\shift-\x,\y/4-2) -- (\shift-\x-1,\y/4-10/4-2);
\node[circle, line width=2pt, draw, fill=black!10, inner sep=0pt, minimum width=12pt] at(\shift-\x,\y/4-2)(\y){\scriptsize \y};}
\fill [fill=white] (0,-2.35) rectangle (\shift-1,-5.6);
\foreach \y/\x in {0/0, 0/17} 
{ \node[circle, line width=0.5pt, draw, fill=black!0, inner sep=0pt, minimum width=12pt] at(\shift-\x,\y/4-2)(\x){\scriptsize \y};}
\foreach \x/\y in {0/7, 7/14, 14/21, 21/28, 28/18, 18/8, 8/15, 15/22, 22/29, 29/19, 19/26, 26/16, 16/6, 6/13, 13/3, 3/10, 10/17}
\draw[<-, line width=2.5] (\x) -- (\y);
\foreach \x/\y in {0/T, 1/T, 2/T, 3/T, 4/S, 5/S, 6/T, 7/T, 8/T, 9/S, 10/T, 11/S, 12/S, 13/T, 14/S, 15/T, 16/S} \node at(\shift-\x-0.5,-5){\textbf\y};
\foreach \x in {0,1,2,3, 4,5,6, 7,8,9, 10,11,12, 13,14,15, 16}\draw[<->,dashed, line width=0.25] (\shift-\x-0.5,-4.25)--(\shift-\x-0.5,-3);
\draw[<->,bend right,color=black!50,line width=1pt,double=white] (1,10) to[out=30,in=150] (5,10);
\end{tikzpicture}
\caption{A demonstration of Anderson's two-step bijection between a $(7,10)$ core partition and a $7/10$ ballot word. Here the downset $A = \{1,2,4,5,9,11,12\}$.}
\label{bijection}
\end{figure}

The first step of the bijection defines a set of natural numbers. An~$(s,t)$ core partition can be reconstructed from the set $A \subset \mathbb{N}$ of hook sizes of its first-column boxes. Indeed, walk along the rim of the partition, rotated by 45 degrees as on the left hand side of Figure~\ref{bijection}. If the first right turn is taken at step~$0$, then $A$~is exactly the set of up-left rim steps.

The avoidance of $s$-hooks and $t$-hooks amounts to the requirement that $a \in A$ if $a+s \in A$ or $a+t \in A$. In other words, $A$ is a downset with respect to the partial order of $\mathbb{N}$ generated by $a \prec a+t$ and $a \prec a+s$, such that~$0 \not\in A$. A~\emph{downset} in a poset is a downward-closed subset, also known as an ideal. Since $s$ and~$t$ are coprime and $A \subseteq \mathbb{N} \setminus (s\mathbb{N}+t\mathbb{N})$, the number of $(s,t)$ core partitions is finite. The right hand side of Figure~\ref{bijection} shows the Hasse diagram of elements below $st$ in this partial order, for $(s,t)=(7,10)$. Clearly, the Hasse diagram would have such a triangular shape with square-grid texture for general coprime $s$ and~$t$.

The elements immediately above $A$ form a closed path through~$0$ in the Hasse diagram, with $s$ steps up and $t$ steps down. We record this path by a word $w$ with $s$ times $S$ and $t$ times $T$. Every such word uniquely describes a downset $A \not\ni 0$, as long as $\#S(p)\cdot t-\#T(p) \cdot s$ is non-negative for every prefix $p$ of $w$. That is, $w$ may be any $s/t$ ballot word.

\medskip

\subsection*{The Size Formula}
Now we are ready to state the relation between the size of an $(s,t)$ core partition and the corresponding $s/t$ ballot word. 

Here and below, $\#u(w)$ denotes the number of occurrences of the word~$u$ as a subsequence in the word~$w$, not necessarily as consecutive characters. For example $\#TS(STSS)=2$. It may be noted that in~\cite{even2020spectral} we provide a systematic analysis of this kind of subword statistics. 

\begin{prop}\label{stst}
The size of the $(s,t)$ core partition that corresponds, via Anderson's bijection, to the $s/t$ ballot word $w$ is
$$ \frac{(s^2-1)(t^2-1)}{24} \;-\; \frac{\#STST(w) + \#TSTS(w)}{2} $$
\end{prop}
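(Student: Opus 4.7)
My plan is to pass from the partition to its corresponding downset $A$ and then to the ballot word $w$ via Anderson's bijection, and to reduce the identity to a combinatorial count of subsequences of $w$.

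First, I parameterize each gap of the numerical semigroup $s\mathbb{N}+t\mathbb{N}$ by the unique pair $(\alpha,\gamma)$ with $1\le \alpha\le s-1$, $\gamma\ge 1$, and $\alpha t>\gamma s$, so that the gap equals $\alpha t-\gamma s$. There are exactly $M:=(s-1)(t-1)/2$ such gaps, and taking $A$ to be the full set of them yields the maximal $(s,t)$-core partition, whose size is $(s^2-1)(t^2-1)/24$. The walk with $v_0=0$ and $v_i - v_{i-1} = +t$ or $-s$ according as $w_i = S$ or $T$ visits the gap $(\alpha,\gamma)$ if and only if the first $\alpha+\gamma$ letters of $w$ contain exactly $\alpha$ copies of $S$, and Anderson's bijection identifies $A$ with the set of gaps \emph{not} visited by this walk.

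Second, I apply the first-column hook formula $|\lambda| = \sum_{a \in A} a - \binom{|A|}{2}$ and subtract from the maximum. Writing $V_g$ for the number of visited gaps and $\Sigma$ for their sum, a short calculation using $|A| = M - V_g$ and $\sum_{a\in A} a = (\text{sum of all gaps}) - \Sigma$ yields
$$\frac{(s^2-1)(t^2-1)}{24} - |\lambda| = \Sigma - M V_g + \binom{V_g+1}{2}.$$
Both $V_g$ and $\Sigma$ have clean descriptions in terms of $w$: the walk sits on a gap exactly at steps $i \in [\tau_1, \sigma_s - 1]$, where $\tau_1$ and $\sigma_s$ are the positions of the first $T$ and the last $S$ in $w$, so $V_g = \max(0, \sigma_s - \tau_1)$ and $\Sigma = \sum_{i=\tau_1}^{\sigma_s - 1} v_i$.

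The heart of the argument is then to identify $\Sigma - M V_g + \binom{V_g+1}{2}$ with $\tfrac12(\#STST(w) + \#TSTS(w))$. For any two $S$-ranks $\alpha_1<\alpha_2$ and $T$-ranks $\gamma_1<\gamma_2$, the four positions $\sigma_{\alpha_1}, \sigma_{\alpha_2}, \tau_{\gamma_1}, \tau_{\gamma_2}$ appear in one of six relative orders, of which the patterns $STST$ and $TSTS$ pick out the two alternating ones. I would expand each visited gap's contribution to $\Sigma$ in terms of its four landmark positions $\sigma_\alpha, \sigma_{\alpha+1}, \tau_\gamma, \tau_{\gamma+1}$, and then regroup pairs of landmarks across visited gaps so as to count alternating 4-patterns. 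The main obstacle will be controlling the degenerate pairings where two different visited gaps share a landmark, as these are precisely what the linear correction $-MV_g$ and the binomial correction $\binom{V_g+1}{2}$ adjust for; once that bookkeeping is verified, the identity follows.
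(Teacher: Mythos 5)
There is a genuine gap, and it occurs at the very first reduction: the claim that Anderson's bijection identifies $A$ with the set of gaps \emph{not visited} by the walk is false. The downset $A$ consists of the gaps lying strictly \emph{below} the lattice path in the Hasse diagram (i.e.\ $(\alpha,\gamma)\in A$ iff $v_{\alpha+\gamma}>\alpha t-\gamma s$); a gap that the walk does not visit can just as well lie strictly \emph{above} the path, in which case it is neither visited nor in $A$. Your identification happens to hold for very small $(s,t)$, where no gap ever sits above the path, but it fails already at moderate sizes. Concretely, take $(s,t)=(5,4)$ and $w=SSTSTSTST$, a valid $5/4$ ballot word with walk $0,4,8,3,7,2,6,1,5,0$. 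The gaps of $\langle 4,5\rangle$ are $1,2,3,6,7,11$; the walk visits $3,7,2,6,1$ but not $11$ (which lies above the path at time $5$, where $v_5=2$). Your rule gives $A=\{11\}$ and hence a partition of size $11$, whereas the true downset is $A=\emptyset$ and the partition is empty — in agreement with the proposition, since $\#STST(w)=25$ and $\#TSTS(w)=5$ give $15-15=0$. (The same failure occurs for the word of Figure~\ref{bijection}: there $|A|=7$ while $16$ of the $27$ gaps are unvisited.) Consequently $|A|=M-V_g$ and $\sum_{a\in A}a=(\text{sum of all gaps})-\Sigma$ are both wrong, the displayed expression $\Sigma-MV_g+\binom{V_g+1}{2}$ is not equal to $\tfrac{(s^2-1)(t^2-1)}{24}-|\lambda|$, and the target identity of your final step is not true, so no amount of bookkeeping with landmark positions will establish it. The correct replacement for $M-|A|$ is $\#TS(w)$, which counts gaps on \emph{or above} the path, not the number of visited gaps $\sigma_s-\tau_1$.

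Beyond this, the third step is only a sketch: you state that a "main obstacle" of degenerate pairings remains to be "verified," so even granting the (incorrect) intermediate formula, the heart of the argument is not carried out. For comparison, the paper avoids any global identification of $A$ inside the gap set: it proceeds by induction on adjacent transpositions $TS\to ST$, starting from $SS\ldots STT\ldots T$ (where the Olsson--Stanton value $(s^2-1)(t^2-1)/24$ supplies the additive constant), and shows that each swap changes both sides of the identity by the same amount $\Delta=a-|A|$, using the same first-column hook formula $\sum_{a\in A}a-\binom{|A|}{2}$ that you invoke. If you want to keep a direct, non-inductive computation, the viable route is to work with the gaps strictly below the path (equivalently, with $\#TS(w)$ and the quantity $\sum_{a\in A}a$ expressed through inversions of $w$), rather than with the visited set.
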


\begin{ex*}
We demonstrate the size formula using all $(2,5)$-core partitions. There are $\tfrac{1}{2+5}\tbinom{2+5}{2} = 3$ such partitions, listed in the table below. For each partition, we first compute the downset of hook sizes in the leftmost column, and then use the Hasse diagram to find the corresponding  2/5-ballot word. Then we count $STST$ and $TSTS$ and verify the size formula. The constant term is ${(2^2-1)(5^2-1)}/{24}=3$ in this case.
\bigskip

\noindent
\begin{tabular}{ccccc}
\textbf{Partition} & \textbf{Downset} & \textbf{Word} & {\footnotesize \!$\#STST{+}\#TSTS$} & \textbf{Size} \\[0.2em] \hline\hline \\[-0.8em] 
\boxes{1/2,2/1} & $\{1,3\}$ & {\footnotesize $SSTTTTT$} & 0 & 3 \\[0.4em]
\boxes{1/1} & $\{1\}$ & {\footnotesize $STSTTTT$} & 4 & 1 \\[0.4em]
. & $\varnothing$ & {\footnotesize $STTSTTT$} & 6 & 0 
\end{tabular}
\!\!
\raisebox{-0.5\height}{
\begin{tikzpicture}[scale=0.275]
\draw[black,line width=1pt] (0,0) -- (2,10) -- (7,0) (1,5) -- (3,1) -- (4,6) (2,3) -- (3,8);
\foreach \s/\t [evaluate={\h=int(5*\s-2*\t);}] in {0/0,1/0,2/0,1/1,2/1,1/2,2/2,2/3,2/4,2/5}
\node[circle,draw=black,fill=white,inner sep=0pt, minimum size=12pt] at(\s+\t,\h){{\footnotesize \h}};
\end{tikzpicture}}
\end{ex*}

\bigskip

\begin{proof}
Any $s/t$ ballot word may be transformed to the word $SS...STT...T$ by a sequence of adjacent transpositions of the form $TS \to ST$. All the intermediate words are also $s/t$ ballot. Therefore, we proceed by induction on such swaps.

Consider an $s/t$ ballot word $w = pTSq$ with any prefix $p$ and suffix $q$, and let $w' = pSTq$. For the induction step $w' \to w$, we look on the following difference of pattern counts between the words.
$$ \Delta \;=\; \frac{\#STST(w) + \#TSTS(w)}{2} \;-\; \frac{\#STST(w') + \#TSTS(w')}{2} $$

Occurrences of $STST$ or $TSTS$ that do not involve the two swapped characters are the same in $w$ and~$w'$. Also occurrences with only one of the middle $S$ and~$T$ are the same, letting this character move one position. However, $STST$ and $TSTS$ that use both middle characters do not survive the swap. We divide into six cases according to the other two characters.
$$ 2\Delta \;=\; \#TS(p) + \#TS(q) + \#S(p)\#T(q) - \#ST(p) - \#ST(q) - \#T(p)\#S(q) $$
For example, the product $\#S(p)\#T(q)$ corresponds to all occurrences of $STST$ in $w = pTSq$ that contain the swapped $TS$, and thus do not have corresponding occurrences in $w'=pSTq$.

To simplify, we use $\#TS(w)$ and $\#ST(w)$, expanded in the different cases for their occurrences in $p$ and~$q$.
\begin{align*}
\#TS(w) \;&=\; \#TS(p) + \#TS(q) + \#T(p)\#S(q) + \#T(p) + \#S(q) + 1 \\
\#ST(w) \;&=\; \#ST(p) + \#ST(q) + \#S(p)\#T(q) + \#S(p) + \#T(q)
\end{align*}
These two relations yield
\begin{align*}
2\Delta \;=\;& \;\#TS(w) - \#ST(w)  +2 \#S(p)\#T(q) -2 \#T(p)\#S(q) \\
&\; - \#T(p) + \#T(q) - \#S(q) + \#S(p) - 1
\end{align*}
Here are another three immediate relations.
\begin{align*}
s \;&=\; \#S(w) \;=\; \#S(p) + \#S(q) + 1 \\
t \;&=\; \#T(w) \;=\; \#T(p) + \#T(q) + 1 \\
st \;&=\; \#S(w)\#T(w) \;=\; \#TS(w) + \#ST(w)
\end{align*}
The following expression for $\Delta$ now follows from the previous one.
$$ \Delta \;=\; \#S(p)\cdot t - \left(\#T(p)+1\right)\cdot s  \;+\; \#TS(w) - \tfrac12(s-1)(t-1) $$

This has a meaningful interpretation in terms of the Hasse diagram in Anderson's bijection. Let $A$ be the downset below the path of $w$, and let $A'$ be the downset of~$w'$. Since $w$ and $w'$ differ in one $TS \to ST$ transposition, $A' = A \cup \{a\}$ for some $a \not\in A$. The prefix $pT$ of~$w$ provides a path from $0$ to~$a$ in the diagram. The steps along this path add up to
$$ a \;=\; \#S(p)\cdot (+t) \;+\; \left(\#T(p)+1\right)\cdot (-s) $$ 
We also express the size of $A$ in terms of~$w$. In the case $w=SS...STT...T$, the downset $A$ is $\mathbb{N}\setminus(s\mathbb{N}+t\mathbb{N})$, which are all the elements below the path $(0,t,2t,\dots(s-1)t,st,s(t-1),\dots,2s,s,0)$ in the Hasse diagram of the partial order. There are $\tfrac12(s-1)(t-1)$ such elements, the ``area'' of the triangle as in Figure~\ref{bijection}. For other $w$, we pop one element from $A$ at each swap $ST \to TS$, hence
$$ |A| \;=\; \tfrac12(s-1)(t-1) \;-\; \#TS(w) $$
In conclusion, the reduction $w \to w'$ adds $\Delta = a - |A|$ to the formula in the proposition, where $A$ and $A \cup \{a\}$ are the first-column hook sizes in the partitions corresponding to $w$ and $w'$.  

We show that $\Delta$ is also the size difference between the $(s,t)$-core partitions. Indeed, every box in the partition has a hook that starts at some rim step $b \not\in A$ and ends at some rim step $a \in A$. By counting boxes, the size of the $(s,t)$-core partition is
$$ \#\left\{(a,b)\in \mathbb{N}^2\;|\;a > b,\; a \in A,\; b \not\in A\right\} \;\;=\;\; \sum_{a \in A} a \;-\; \binom{|A|}{2} $$
This quantity increases by $a-|A|$ upon insertion of a new element $a$ to $A$. This completes the induction step between $w$ and $w'$ in the proof of the proposition. 

It is left to determine a global additive shift that only depends on $s$ and~$t$, which should be given by the first term in the proposition. Since for $w = SS...STT...T$ the second term vanishes, this must be the size of the partition that corresponds to $A=\mathbb{N}\setminus(s\mathbb{N}+t\mathbb{N})$. This extreme case was solved by Olsson and Stanton who proved that the number of boxes is~$(s^2-1)(t^2-1)/24$~\cite{olsson2007block}, cf.~\cite{tripathi2009largest, johnson2018lattice}.
\end{proof}

\begin{rmk*}
It is interesting to compare the size formula of Proposition~\ref{stst} with the one used by Johnson~\cite{johnson2018lattice}, who related $(s,t)$-core partitions to integer points in lattice polytopes and Ehrhart theory. 

In short, one can assign to every $(s,t)$-core partition a point in $\mathbb{N}^s$, given by the positions of the letter~$S$ in the corresponding $s/t$-ballot word, in increasing order. The image of this map is the integer points in a certain simplex in~$\mathbb{R}^s$ which depends on~$t$. The word statistics $\#STST$ and $\#TSTS$ are quadratic polynomials in the coordinates of the corresponding lattice point, and therefore, so is the size of the $(s,t)$-core partition. Up to a linear transformation on the coordinates, Johnson derived such a quadratic expression for the partition's size in Lemma~26 of~\cite{johnson2018lattice}, and then used it to prove Armstrong's conjecture.  
\end{rmk*}

\medskip

\subsection*{Watson's Statistics}

Before using the size formula to prove Theorem~\ref{stcore}, we provide additional background on the statistical results to which the problem is reduced.

In statistics, a measure of \emph{goodness of fit} aims to quantify the discrepancy between a theoretical distribution over some space and an empirical distribution which is based on observed values. Also in the \emph{two-sample} setting, a~measure of \emph{similarity} compares two unknown distributions using two respective sets of observations. Such statistical measures are commonly used in \emph{hypothesis testing}, to decide whether to reject the null hypotheses, that the observations fit the specified distribution or that the two samples originate in the same underlying distribution.

The classical \emph{Cram\'er--von Mises criterion} addresses the case of continuous distributions on the real line~\cite{cramer1928composition,mises1931wahrscheinlichkeitsrechnung}. It is defined as the Lebesgue–Stieltjes integral
$$ \omega_n^2 \;=\; \int\limits_{-\infty}^{\infty} \left[F_n(x) - F(x)\right]^2 dF(x) $$
The theoretical distribution is specified by the cumulative distribution function $F(x) = P(X \leq x)$ of the random variable~$X$, and $F_n$~is the empirical distribution $F_n(x) = \#\{i:X_i \leq x\}/n$, of the given sample $X_1,\dots,X_n$.

The analogous two-sample statistic compares the empirical distributions of two real-valued samples $X_1,\dots,X_m$ and $Y_1,\dots,Y_n$ \cite{lehmann1951consistency,rosenblatt1952limit}.
$$ \omega_{mn}^2 \;=\; \int\limits_{-\infty}^{\infty} \left[F_m(x) - G_n(x)\right]^2 dH(x) $$
Here $F_m$ and $G_n$ are the empirical distributions of the two respective given samples $X_1,\dots,X_m$ and $Y_1,\dots,Y_n$, and the mixed empirical distribution of both samples is $H = \tfrac{m}{m+n}F_m + \tfrac{n}{m+n}G_n$.

Watson derived two corresponding measures for samples that are drawn from distributions on a circle, rather than the real line~\cite{watson1961goodness, watson1962goodness}. It is tempting to map the circle to a real interval by cutting it at some point, and then compute $\omega_n^2$ or $\omega_{mn}^2$ as before. Since the resulting statistics depend on the arbitrary cutting point, Watson proposed new statistics that address this issue. Wastson's
$$ U_{n}^2 \;=\; n \int\limits_{-\infty}^{\infty} \left[F_n(x) - F(x)  - \textstyle\int\limits_{-\infty}^{\infty} \left[F_n(r) - F(r)\right] dF(r) \right]^2 dF(x) $$
and
$$ U_{mn}^2 \;=\; \frac{mn}{m+n} \int\limits_{-\infty}^{\infty} \left[F_m(x) - G_n(x)  - \textstyle\int\limits_{-\infty}^{\infty} \left[F_m(r) - G_n(r)\right] dH(r) \right]^2 dH(x) $$
Note that the difference between the distributions is being corrected by subtracting its mean. This correction gives rotation invariant measures of discrepancy between distribution.

In the hypothesis testing application, we reject the null that the compared distributions fit if the suitable test statistic exceeds some critical value. The above tests are \emph{consistent}, meaning that if the underlying distributions differ then the null hypothesis is rejected with probability tending to one as the sample size grows. They are \emph{nonparamteric}, designed for any continuous alternative distributions, with no further assumptions. The statistics are easy to compute, since the empirical distribution functions attain discrete sets of values, so integrals become finite sums. They are conveniently \emph{distribution-free}, that is, if the underlying distributions are equal, then the behaviour of the test statistic does not depend on that particular distribution.

Indeed, it is not difficult to observe that $\omega_{mn}^2$ and $U_{mn}^2$ only rely on the ordering of the given $m+n$ points along the real line, or around the circle, rather than their precise values. It is hence sufficient to summarize the two given samples as one binary word $w \in \{X,Y\}^{m+n}$, encoding which sample each data point comes from, in order of occurrence along the real line or circle. For example, if $X_2 < Y_1 < Y_3 < X_1 < Y_2$ then $w=XYYXY$. The statistic $U_{mn}^2$ is appropriately invariant to rotations of this word. Under the null hypothesis that all $X_i \sim Y_j$, each word with $m$ copies of~$X$ and $n$ copies of~$Y$ is equally likely. 

Persson \cite{persson1979new} showed that several two-sample test statistics such as the above ones can be elegantly expressed in terms of subword counts in the random word~$w$. Using the above notation for the number of occurrences, Persson's formula for Watson's two-sample statistic is as follows.
$$ U_{mn}^2 \;=\; \frac{mn(mn+2)/12 \;-\; \left[\#XYXY(w)+\#YXYX(w)\right]}{mn(m+n)}  $$

Watson~\cite{watson1961goodness, watson1962goodness} showed that, under the null hypotheses, both test statistics converge in law to the same limit: $U_n^2 \to U^2$ as $n \to \infty$, and $U_{mn}^2 \to U^2$ as $m,n \to \infty$ assuming $m/n \to \lambda > 0$.
This asymptotic distribution is a sum of squares of standard normal random variables:
$$ U^2 \;\sim\; \sum\limits_{k=1}^{\infty}\frac{Z_k^2+\tilde{Z}_k^2}{4\pi^2k^2} \;\;\;\; \text{ for independent } Z_k \sim \tilde{Z}_k \sim \mathcal{N}(0,1) $$
See Watson's papers for further details on the distribution of~$U^2$, and its various representations: the probability density function, the cumulative distribution, the moment generating function, an integral of a squared Gaussian process, and a relation to the Kolmogorov--Smirnov test statistic. 

Janson~\cite{janson1984asymptotic} showed that the limit $U_{mn}^2 \to U^2$ actually holds without any restrictions on the relation between $m$ and~$n$, as long as they both tend to~$\infty$.

\medskip

\subsection*{Proof of Theorem~\ref{stcore}}
The theorem is proven in two steps. First we use the size formula to relate the size distribution of $(s,t)$-cores with subword statistics. Then we apply the results of Watson, Persson, and Janson stated above to obtain the distribution explicitly.  

\medskip

\emph{Step 1}:
Anderson showed that $(s,t)$-core partitions are in bijection with $s/t$-ballot words, and in Proposition~\ref{stst} we showed that a certain statistic of the ballot word gives the size of the corresponding core partition. This implies the following equality of distributions between the size $X_{st}$ of a uniformly random partition and the statistic for random words:
$$ X_{st} \;\sim\; \frac{(s^2-1)(t^2-1)}{24} \;-\; \frac{\#STST(w) + \#TSTS(w)}{2} $$
Here the random word $w$ follows the uniform distribution over the set of all the $\tfrac{1}{s+t}\tbinom{s+t}{s}$ words that satisfy the $s/t$-ballot condition. 

We observe that the distribution of the subword count $\#STST+\#TSTS$ over $s/t$-ballot words is the same as its distribution over all the $\tbinom{s+t}{s}$ words with $\#S=s$ and~$\#T=t$. Indeed, this statistic is invariant under cyclic rotation of words, and since $s$ and~$t$ are coprime every orbit of this $\mathbb{Z}_{s+t}$ action has $s+t$ different words, exactly one of which is $s/t$-ballot. To see that, consider the path as on the right hand side of Figure~\ref{bijection}, and note that any nontrivial rotation takes its unique minimum from zero to a negative value, violating the $s/t$ ballot condition. 

We remark that this rotation argument is a variant of the classical \emph{Cycle Lemma} \cite{dvoretzky1947problem,spitzer1956combinatorial}. It provides a short proof of the enumeration of $s/t$ ballot words by the rational Catalan number~$\frac{1}{s+t} \tbinom{s+t}{s}$.

In conclusion, in the equivalent distribution for $X_{st}$ stated above, the random word $w$ may alternatively be taken as uniform among all the words with $\#S(w)=s$ and~$\#T(w)=t$.

\medskip

\emph{Step 2}.
Suppose that the uniformly random word $w$ originates in the random ordering along the real line of $s+t$ independent random variables $S_1,\dots,S_s,T_1,\dots,T_t$ all following the same continuous distribution. By Persson's formula, their two-sample Watson's $U_{st}^2$ satisfies
$$ \frac{st(s+t)}{2} U_{st}^2 \;=\; \frac{st(st+2)}{24} \;-\; \frac{\#STST(w)+\#TSTS(w)}{2} $$
In comparison to the above distribution of~$X_{st}$, the right hand side only differs by
$$ \frac{st(st+2)}{24} \;-\; \frac{(s^2-1)(t^2-1)}{24} \;=\; \frac{(s+t)^2-1}{24} \;=\; o\left(\frac{st(s+t)}{2}\right) $$
as $\min(s,t) \to  \infty$. 

Therefore, the normalized partition size $X_{st}/\tfrac{1}{2}st(s+t)$ converges to the same limit law as the null distribution of~$U_{st}^2$. This asymptotic distribution is given by~$U^2$ based on the above results of Watson and Janson.
\qed

\bigskip

\section*{Acknowledgements}

I would like to thank Doron Zeilberger for his hospitality at Rutgers, and for introducing me to this problem.

\smallskip

I would like to thank Tsviqa Lakrec and Ran Tessler for valuable discussions and feedback.

\smallskip

I would like to thank the two anonymous reviewers for useful comments and suggestions that helped improve the exposition.

\smallskip

I would like to thank the Lloyds Register Foundation / Alan Turing Institute programme on Data-Centric Engineering for their support.

{
\bibliographystyle{alpha}
\bibliography{main}

\begin{thebibliography}{WWY18}

\bibitem[Agg15a]{aggarwal2015armstrong}
Amol Aggarwal.
\newblock Armstrong’s conjecture for $(k,mk+1)$-core partitions.
\newblock {\em European Journal of Combinatorics}, 47:54--67, 2015.

\bibitem[Agg15b]{aggarwal2015does}
Amol Aggarwal.
\newblock When does the set of $(a,b,c)$-core partitions have a unique maximal
  element?
\newblock {\em The Electronic Journal of Combinatorics}, 22(2):P2.31,1--10,
  2015.

\bibitem[AHJ14]{armstrong2014results}
Drew Armstrong, Christopher~RH Hanusa, and Brant~C Jones.
\newblock Results and conjectures on simultaneous core partitions.
\newblock {\em European Journal of Combinatorics}, 41:205--220, 2014.

\bibitem[Amd15]{amdeberhan2015theorems}
Tewodros Amdeberhan.
\newblock Theorems, problems and conjectures.
\newblock {\em Available at arXiv:1207.4045}, 2015.

\bibitem[And02]{anderson2002partitions}
Jaclyn Anderson.
\newblock Partitions which are simultaneously $t_1$-and $t_2$-core.
\newblock {\em Discrete Mathematics}, 248(1-3):237--243, 2002.

\bibitem[AS15]{amdeberhan2015multi}
Tewodros Amdeberhan and Emily {Sergel Leven}.
\newblock Multi-cores, posets, and lattice paths.
\newblock {\em Advances in Applied Mathematics}, 71:1--13, 2015.

\bibitem[BNY18]{baek2018bijective}
Jineon Baek, Hayan Nam, and Myungjun Yu.
\newblock A bijective proof of {A}mdeberhan’s conjecture on the number of
  $(s,s+2)$-core partitions with distinct parts.
\newblock {\em Discrete Mathematics}, 341(5):1294--1300, 2018.

\bibitem[BNY19]{baek2019johnson}
Jineon Baek, Hayan Nam, and Myungjun Yu.
\newblock Johnson’s bijections and their application to counting simultaneous
  core partitions.
\newblock {\em European Journal of Combinatorics}, 75:43--54, 2019.

\bibitem[CHW16]{chen2016average}
William Chen, Harry Huang, and Larry Wang.
\newblock Average size of a self-conjugate $(s,t)$-core partition.
\newblock {\em Proceedings of the American Mathematical Society},
  144(4):1391--1399, 2016.

\bibitem[Cra28]{cramer1928composition}
Harald Cram\'er.
\newblock On the composition of elementary errors.
\newblock {\em Scandinavian Actuarial Journal}, 1928(1):141--180, 1928.

\bibitem[DM47]{dvoretzky1947problem}
Aryeh Dvoretzky and Theodore Motzkin.
\newblock A problem of arrangements.
\newblock {\em Duke Mathematical Journal}, 14(2):305--313, 1947.

\bibitem[ELT20]{even2020spectral}
Chaim Even{-Zohar}, Tsviqa Lakrec, and Ran~J Tessler.
\newblock Spectral analysis of word statistics.
\newblock {\em arXiv preprint arXiv:2012.00742}, 2020.

\bibitem[EZ15]{ekhad2015explicit}
Shalosh~B Ekhad and Doron Zeilberger.
\newblock Explicit expressions for the variance and higher moments of the size
  of a simultaneous core partition and its limiting distribution.
\newblock {\em arXiv preprint arXiv:1508.07637}, 2015.

\bibitem[FMS09]{ford2009self}
Ben Ford, Ho{\`a}ng Mai, and Lawrence Sze.
\newblock Self-conjugate simultaneous $p$ and $q$ core partitions and blocks of
  {$A_n$}.
\newblock {\em Journal of Number Theory}, 129(4):858--865, 2009.

\bibitem[GM14]{gorsky2014compactified}
Evgeny Gorsky and Mikhail Mazin.
\newblock Compactified {J}acobians and $(q,t)$-{C}atalan numbers, {II}.
\newblock {\em Journal of Algebraic Combinatorics}, 39(1):153--186, 2014.

\bibitem[GMV14]{gorsky2014affine}
Eugene Gorsky, Mikhail Mazin, and Monica Vazirani.
\newblock Affine permutations and rational slope parking functions.
\newblock {\em arXiv preprint arXiv:1403.0303}, 2014.

\bibitem[Jan84]{janson1984asymptotic}
Svante Janson.
\newblock The asymptotic distributions of incomplete {U}-statistics.
\newblock {\em Zeitschrift f{\"u}r Wahrscheinlichkeitstheorie und Verwandte
  Gebiete}, 66(4):495--505, 1984.

\bibitem[JK81]{james1981representation}
Gordon James and Adalbert Kerber.
\newblock The representation theory of the symmetric group.
\newblock {\em Encyclopedia Math. Appl}, 1981.

\bibitem[Joh18a]{johnson2018lattice}
Paul Johnson.
\newblock Lattice points and simultaneous core partitions.
\newblock {\em The Electronic Journal of Combinatorics}, 25(3):3--47, 2018.

\bibitem[Joh18b]{johnson2018simultaneous}
Paul Johnson.
\newblock Simultaneous cores with restrictions and a question of {Z}aleski and
  {Z}eilberger.
\newblock {\em arXiv preprint arXiv:1802.09621}, 2018.

\bibitem[KST18]{komlos2018asymptotic}
J{\'a}nos Koml{\'o}s, Emily Sergel, and G{\'a}bor Tusn{\'a}dy.
\newblock The asymptotic normality of $(s,s+1)$-cores with distinct parts.
\newblock {\em arXiv preprint arXiv:1809.00412}, 2018.

\bibitem[Leh51]{lehmann1951consistency}
Eric~L Lehmann.
\newblock Consistency and unbiasedness of certain nonparametric tests.
\newblock {\em The annals of mathematical statistics}, pages 165--179, 1951.

\bibitem[Nak40]{nakayama1940some}
Tadasi Nakayama.
\newblock On some modular properties of irreducible representations of a
  symmetric group, {I}-{II}.
\newblock In {\em Japanese journal of mathematics: transactions and abstracts},
  volume~17, pages 165--184, 411--423. The Mathematical Society of Japan, 1940.

\bibitem[NS17]{nath2017abaci}
Rishi Nath and James~A Sellers.
\newblock Abaci structures of $(s,ms\pm1)$-core partitions.
\newblock {\em The Electronic Journal of Combinatorics}, 24(1):P1.5,1--20,
  2017.

\bibitem[OS07]{olsson2007block}
J{\o}rn~B Olsson and Dennis Stanton.
\newblock Block inclusions and cores of partitions.
\newblock {\em Aequationes mathematicae}, 74(1-2):90--110, 2007.

\bibitem[Par18]{paramonov2018cores}
Kirill Paramonov.
\newblock Cores with distinct parts and bigraded {F}ibonacci numbers.
\newblock {\em Discrete Mathematics}, 341(4):875--888, 2018.

\bibitem[Per79]{persson1979new}
Tore Persson.
\newblock A new way to obtain {W}atson's {$U^2$}.
\newblock {\em Scandinavian Journal of Statistics}, pages 119--122, 1979.

\bibitem[Ros52]{rosenblatt1952limit}
Murray Rosenblatt.
\newblock Limit theorems associated with variants of the von {M}ises statistic.
\newblock {\em The Annals of Mathematical Statistics}, pages 617--623, 1952.

\bibitem[Spi56]{spitzer1956combinatorial}
Frank Spitzer.
\newblock A combinatorial lemma and its application to probability theory.
\newblock {\em Transactions of the American Mathematical Society},
  82(2):323--339, 1956.

\bibitem[Str16]{straub2016core}
Armin Straub.
\newblock Core partitions into distinct parts and an analog of {E}uler’s
  theorem.
\newblock {\em European Journal of Combinatorics}, 57:40--49, 2016.

\bibitem[SZ15]{stanley2015catalan}
Richard~P Stanley and Fabrizio Zanello.
\newblock The {C}atalan case of {A}rmstrong's conjecture on simultaneous core
  partitions.
\newblock {\em SIAM Journal on Discrete Mathematics}, 29(1):658--666, 2015.

\bibitem[Thi16]{thiel2016anderson}
Marko Thiel.
\newblock From {A}nderson to zeta.
\newblock {\em Advances in Applied Mathematics}, 81:156--201, 2016.

\bibitem[Tri09]{tripathi2009largest}
Amitabha Tripathi.
\newblock On the largest size of a partition that is both $s$-core and
  $t$-core.
\newblock {\em Journal of Number Theory}, 129(7):1805--1811, 2009.

\bibitem[TW17]{thiel2017strange}
Marko Thiel and Nathan Williams.
\newblock Strange expectations and simultaneous cores.
\newblock {\em Journal of Algebraic Combinatorics}, 46(1):219--261, 2017.

\bibitem[vM31]{mises1931wahrscheinlichkeitsrechnung}
Richard von Mises.
\newblock {\em Wahrscheinlichkeitsrechnung und {I}hre {A}nwendung in der
  {S}tatistik und {T}heoretischen {P}hysik}.
\newblock Leipzig and Wien, Franz Deuticke, 1931.

\bibitem[Wan16]{wang2016simultaneous}
Victor~Y Wang.
\newblock Simultaneous core partitions: Parameterizations and sums.
\newblock {\em The Electronic Journal of Combinatorics}, 23(1):1--34, 2016.

\bibitem[Wat61]{watson1961goodness}
George~S Watson.
\newblock Goodness-of-fit tests on a circle.
\newblock {\em Biometrika}, 48(1/2):109--114, 1961.

\bibitem[Wat62]{watson1962goodness}
George~S Watson.
\newblock Goodness-of-fit tests on a circle. {II}.
\newblock {\em Biometrika}, 49(1/2):57--63, 1962.

\bibitem[WWY18]{wang2018moments}
Joseph~LP Wang, Larry~XW Wang, and Jane~YX Yang.
\newblock Moments about the mean of the size of a self-conjugate $(s, t)$-core
  partition.
\newblock {\em Discrete Mathematics}, 341(11):3029--3043, 2018.

\bibitem[Xio16]{xiong2016largest}
Huan Xiong.
\newblock On the largest size of $(t,t+1,...,t+p)$-core partitions.
\newblock {\em Discrete Mathematics}, 339(1):308--317, 2016.

\bibitem[Xio18a]{xiong2018core}
Huan Xiong.
\newblock Core partitions with distinct parts.
\newblock {\em The Electronic Journal of Combinatorics}, 25(1):P1.57,1--10,
  2018.

\bibitem[Xio18b]{xiong2018largest}
Huan Xiong.
\newblock On the largest sizes of certain simultaneous core partitions with
  distinct parts.
\newblock {\em European Journal of Combinatorics}, 71:33--42, 2018.

\bibitem[XZ19]{xiong2019polynomiality}
Huan Xiong and Wenston~JT Zang.
\newblock On the polynomiality and asymptotics of moments of sizes for random
  $(n,dn\pm1)$-core partitions with distinct parts.
\newblock {\em Science China Mathematics}, pages 1--18, 2019.

\bibitem[YQJZ17]{yan20172k}
Sherry~HF Yan, Guizhi Qin, Zemin Jin, and Robin~DP Zhou.
\newblock On $(2k+1,2k+3)$-core partitions with distinct parts.
\newblock {\em Discrete Mathematics}, 340(6):1191--1202, 2017.

\bibitem[YZZ15]{yang2015enumeration}
Jane~YX Yang, Michael~XX Zhong, and Robin~DP Zhou.
\newblock On the enumeration of $(s, s+ 1, s+ 2)$-core partitions.
\newblock {\em European Journal of Combinatorics}, 49:203--217, 2015.

\bibitem[Zal17]{zaleski2017explicit-ss1}
Anthony Zaleski.
\newblock Explicit expressions for the moments of the size of an $(s,s+1)$-core
  partition with distinct parts.
\newblock {\em Advances in Applied Mathematics}, 84:1--7, 2017.

\bibitem[Zal19]{zaleski2017explicit-d}
Anthony Zaleski.
\newblock Explicit expressions for the moments of the size of an
  {$(n,dn-1)$}-core partition with distinct parts.
\newblock {\em Integers}, 19:2, 2019.

\bibitem[ZZ17a]{zaleski2017explicit-13}
Anthony Zaleski and Doron Zeilberger.
\newblock Explicit expressions for the expectation, variance and higher moments
  of the size of a $(2n+1,2n+3)$-core partition with distinct parts.
\newblock {\em Journal of Difference Equations and Applications},
  23(7):1241--1254, 2017.

\bibitem[ZZ17b]{zaleski2017intriguing}
Anthony Zaleski and Doron Zeilberger.
\newblock On the intriguing problem of counting $(n+1,n+2)$-core partitions
  into odd parts.
\newblock {\em arXiv preprint arXiv:1712.10072}, 2017.

\end{thebibliography}
}
\end{document}